\newcommand{\eq}[1]{\begin{equation}\label{#1}}
\newcommand{\en}{\end{equation}}
\newcommand{\eqst}[1]{\begin{equation*}\label{#1}}
\newcommand{\enst}{\end{equation*}}
\newcommand{\eqr}[1]{\begin{eqnarray}\label{#1}}
\newcommand{\enr}{\end{eqnarray}}
\newcommand{\eqrst}[1]{\begin{eqnarray*}\label{#1}}
\newcommand{\enrst}{\end{eqnarray*}}
\numberwithin{equation}{section}
\newcommand{\ep}{\end{proposition}}
\newcommand{\bc}[1]{\begin{corollary}\label{#1}}
\newcommand{\ec}{\end{corollary}}
\newcommand{\bdf}[1]{\begin{definition}\label{\rm #1}}
\newcommand{\edf}{\end{definition}}
\newcommand{\bt}[1]{\begin{theorem}\label{#1}}
\newcommand{\et}{\end{theorem}}
\newcommand{\bl}[1]{\begin{lemma}\label{#1}}
\newcommand{\el}{\end{lemma}}
\newcommand{\bp}[1]{\begin{proposition}\label{#1}}
\newcommand{\br}[1]{\begin{remark}\label{#1}}
\newcommand{\er}{\end{remark}}
\newcommand{\bi}{\begin{description}}
\newcommand{\ei}{\end{description} }
  \newcommand{\beq}{\begin{equation}}
  \newcommand{\eeq}{\end{equation}}
\newcommand{\del}{\delta}
\newcommand{\RR}{\mathbb{R}}
\newcommand{\BR}{\mathbb{R}^{n}}
\newcommand{\lka}{\langle}
\newcommand{\rka}{\rangle}
  \newtheorem{theorem}{Theorem}[section]
\newtheorem{definition}[theorem]{Definition}
\newtheorem{remark}[theorem]{Remark}
  \newtheorem{lemma}[theorem]{Lemma}
\newtheorem{proposition}[theorem]{Proposition}
\newtheorem{corollary}[theorem]{Corollary}
\begin{document}
\title{Vorticity Gramian of compact Riemannian 
manifolds}
\author[1, 2 , $\ast ,$ 
\href{https://orcid.org/0000-0002-8628-0298}{iD1}]{Louis Omenyi}
\author[1 , 
\href{https://orcid.org/0000-0002-1051-5193}{iD2}]{Emmanuel Nwaeze}
\author[1,  
\href{https://orcid.org/0000-0003-1494-9850}{iD3}]{Friday Oyakhire}
\author[1, 
\href{https://orcid.org/0000-0003-2609-9785}{iD4}]{Monday Ekhator}

\affil[1]{Department  of  Mathematics and Statistics, 
 Alex Ekwueme Federal University, Ndufu-Alike, 
 Nigeria
($^{\ast}$Corresponding author E-mail: omenyi.louis@funai.edu.ng)\newline Corresponding author ORCID: 
0000-0002-8628-0298}
\affil[2]{Department  of  Mathematical Sciences, 
 Loughborough University, Loughborough, Leicestershire, 
 United Kingdom}

\date{\small Preprint submitted to RGN Publications 
on 06/November/2021}

\maketitle
\begin{abstract}
The vorticity of a vector field on $3$-dimensional Euclidean space is 
usually given by the curl of the vector field. In this paper, we extend 
this concept to $n$-dimensional compact and oriented Riemannian 
manifold. We analyse many properties of this operation. We prove that a 
vector field on a compact Riemannian manifold admits a unique Helmholtz 
decomposition and establish that every smooth vector field on an open 
neighbourhood of a compact Riemannian manifold admits a Stokes’ type 
identity.

\vskip1em \noindent \textbf{2010 AMS Classification:}
58J65; 58J30; 53C20  
\vskip1em \noindent \textbf{Keywords and phrases:}
Manifold, curl, metric tensor flow, Hodge star operator, Helmholtz 
decomposition

\vskip1em \noindent \textbf{Article type:} Research Article  \ \
\end{abstract}

\section{Introduction}
Vorticity is a pseudovector field describing the local spinning 
motion of a flow near some point on a manifold. In classical mechanics,  
the dynamics of a flow are described by its rotation and expansion. 
Calin and Chang \cite{Cal} expressed the rotation component by the curl 
vector, while the expansion is described by the divergence function. 
The classical formulas involving rotation and expansion in the case 
of smooth functions and  vector fields on Riemannian manifolds show 
that gradient vector fields do not rotate and  that the curl vector 
field is incompressible. Varayu, Chew et al \cite{Var} among others 
showed that on Riemannian manifolds, the curl of a vector field is 
not a vector field, but a tensor. 

Many studies  on vorticity of flows on manifolds have been ongoing 
for many decades. For example, Frankel \cite{Fra} in fifties 
established how homology of manifolds influences vector and tensor 
fields on the manifolds. It showed that the vector fields give rise 
to one-parameter groups of point divergence-free transformations 
of the manifolds. The work of Xie and Mar \cite{Xie} employed Poisson 
equation for stream and vorticity equation to study $2$-dimensional 
vorticity and stream function expanded in general curvilinear 
coordinates. It constructed numerical algorithms of covariant, 
anti-covariant metric tensor and Christoffel symbols of the first 
and second kinds in curvilinear coordinates. Similarly, Perez-Garcia \cite{Per} studied 
exact solutions of the vorticity equation on the sphere as a manifold. 
In the work of  Peng and Yang \cite{Pen}, the existence of the curl 
operator on higher dimensional Euclidean space, $\BR , ~~ n > 3,$ 
was proved.

More recently, Bauer, Kolev and Preston \cite{Bau} carried out a 
geometric investigations of a 
vorticity model equation extending the works of  \cite{Vei, Can, Esc, 
Gus, Ign, Pak} and  Kim \cite{Kim} on  vorticity to manifold study.  
Besides, Deshmukh, Pesta and Turki \cite{Des} went ahead to show that 
the presence of a geodesic vector field on a Riemannian manifold 
influences its geometry while B$\ddot{a}$r \cite{Bar2} and 
M$\ddot{u}$ller \cite{Mul} extended the curl and divergence 
operators to odd-dimensional manifolds in arbitrary basis.

In this study, we extend the concept of vorticity to $n$-dimensional 
compact and oriented Riemannian manifolds and analyse many properties 
of this operation. We proceed with fixing our notations and briefly 
explaining some basic concepts required to follow the discussions. 

Let $(M,g)$ be a Riemannian manifold. By this we mean that $M$ is a 
topological space  that is locally similar to the Euclidean space and 
$g$ is the Riemannian metric on $M.$ We recall that a Riemannian metric 
$g$ on a smooth manifold $M$ is a symmetric, positive definite 
$(0,2)$-tensor field, see e.g. \cite{Jos, Lee, Agr} and \cite{Gri}. 
This means that for any  point $p \in M,$ the metric is the map 
$g_{p} : T_{p}M \times T_{p}M \rightarrow \RR $ that is a positive 
definite scalar product for a tangent space $T_{p}M.$ 
The Riemannian metric enables to measure distances, angles and 
lengths of vectors and curves on the manifold, see e.g. 
\cite{Bar, Bus, Cha, Jos} and \cite{Lee}, for details. We  denote the 
Riemannian manifold $(M,g)$ simply by $M.$  The manifold $M$ is called 
compact if it is compact as a topological space. If $M$ is a smooth 
manifold, then \cite{Cal} and \cite{Agr} proved that there is at 
least one Riemannian metric on $M.$ 

We call a function $f:M \rightarrow \RR$ smooth if for every chart 
$(U, \phi)$ on $M,$ and the function $f \circ \phi^{-1} : \phi (U) 
\rightarrow \RR$ is smooth. The set of all smooth functions on the 
manifold $M$ will be denoted by $C^{\infty}(M). $ 
Let $\Omega ^{k}$ denote the vector space of smooth 
$k$-forms on $M,$ and let $d: \Omega ^{k}\rightarrow \Omega ^{k+1}$ be 
the exterior derivative. Note that the metric which gives an inner 
product on the tangent space $T_{p}M$ at each $p \in M$ induces a 
natural metric on each cotangent space $T^{\ast}_{p}M,$ as follows. 
At $p,$ let $\{b_{1}, b_{2}, \cdots , b_{n} \}$ be an orthonormal basis 
for the tangent space. One obtains a metric on the cotangent space by 
declaring that the dual basis $\{b^{1}, b^{2}, \cdots , b^{n} \}$ is 
orthonormal. Hence given any two $k$-forms $\beta$ and $\gamma ,$ 
we have that $(\beta , \gamma )$ is a function on $M.$ We call 
$( \cdot , \cdot )$ the pointwise inner product; see e.g. \cite{Gri, 
Bus, Cha, Lee} and \cite{Ome}. For a coordinate chart on $M,$ 
\[(x^{1},\cdots ,x^{n}):U \rightarrow\BR ,\] 
we represent $g$ by the Gram matrix $(g_{ij})$ where  $g_{ij}= 
\lka \frac{\partial}{\partial x^{i}}, 
\frac{\partial}{\partial x^{j}} \rka ,$  
and $\lka , \rka$ is the inner product on the tangent space. 
The volume form $dV$ is defined be 
$b^{1} \wedge b^{2} \wedge \cdots \wedge b^{n},$ and it is a well-known 
fact from linear algebra that 
$dV= \sqrt{|g|}dx$  where $dx = dx^{1} \wedge \cdots \wedge dx^{n}.$ 
Using the point-wise inner product above, one writes  the $L^{2}$-inner 
product on $\Omega ^{k}(M)$  as 
\[\lka \beta , \gamma \rka = \int _{M} (\beta , \gamma) dV  ~~ 
\forall \beta , \gamma \in \Omega ^{k}(M). \]

\section{Vector fields and differential operators}
Vector fields and differential operators are the main tools used 
in the analysis of vorticity in this work. We employ these tools to 
construct the curl operator on $M$ and analyse its many properties. 

A vector field on $M$ is a family 
 $\{ X(p) \}_{p \in M}$ of tangent vectors such that $X(p) \in T_{p}M$
 for any $p\in M.$ In local coordinates chart 
 $(x^{1}, \cdots , x^{n}),$ 
 \[ X(p) = X^{i}(p)\frac{\partial}{\partial x^{i}|_{x=p}}. \]
The vector field $X(p)$ is called smooth if all functions $X^{i}$ are 
smooth in any chart in $M;$ see e.g. \cite{Agr, Bus} and \cite{Cal}.  
We denote the set of all vector fields on $M$  by $\Gamma (M).$ 

\begin{definition}[\cite{Cha, Gri}]
For every $p \in M$ the differential map $df$ at $p$ is defined by
\[ df_{p}:T_{p}M \rightarrow T_{f(p)}N ~~ \text{with}~~  
df_{p}(V)(h)= V(h \circ f) , ~~ \forall V \in T_{p}M,
 ~~ \forall h \in C^{\infty}(N). \]
Locally, it is given by 
\[df_{p}(\frac{\partial}{\partial x_{j|_{p}}})= 
\sum _{k=1}^{n}\frac{\partial f^{k}}{\partial x_{j|_{p}}}
\frac{\partial}{\partial y^{k}},\]
where $f = (f^{1}, f^{2}, \cdots , f^{n}).$  The matrix 
$\Big( \frac{\partial f^{k}}{\partial x_{j}} \Big)_{k,j}$ 
is the Jacobian of $f$ with respect to the charts 
$(x^{1},x^{2}, \cdots , x^{n})$ and $(y^{1},y^{2}, \cdots , y^{n})$ 
on $M$ and $N$ respectively. 
\end{definition}

\begin{definition}
Let $f\in C^{\infty}(M)$ be a smooth function. The gradient of $f,$ 
denoted by $\text{grad} f,$ is a vector field on $M$ metrically 
equivalent to the differential $df$ of $f:$
\begin{equation*}\label{lo1}
g(\text{grad} f , X) = df(X)= X(f ) , ~~ \forall X \in \Gamma(M). 
\end{equation*} 
\end{definition}

\begin{definition}
Let $X \in \Gamma(M)$  on $M.$ The  divergence of $X$ 
at the point $p \in M$ denoted as $\partial X$ is defined locally as 
\begin{equation*}\label{s2.10}
\partial X=\sum _{i=1}^{n}X^{i}_{;i} =
\sum _{i=1}^{n} \Big( \frac{\partial X^{i}}{\partial x_{i}} + 
\sum _{j}\Gamma _{ij}^{i}X^{j}  \Big); 
\end{equation*}
where 
\[ \Gamma_{jk}^{i} = \frac{1}{2}g^{il}
\Big( \frac{\partial g_{jl}}{\partial x_{k}} + 
 \frac{\partial g_{kl}}{\partial x_{j}} - 
 \frac{\partial g_{jk}}{\partial x_{l}}  \Big) \]
is the Christoffel symbol. In local coordinates, 
\[ \partial X = \frac{1}{\sqrt{|g|}}\frac{\partial}{\partial x_{j}}
(\sqrt{|g|}X^{j})\]
with summation over $j = 1, \cdots , n.$
\end{definition}
\begin{definition}
The Lie bracket 
$[,] : \Gamma(M) \times \Gamma(M) \rightarrow \Gamma(M)$ 
 is defined by
 \begin{equation*}\label{s10}
 [X,Y] = X(Y) - Y(X) ~~ \forall X,Y \in \Gamma(M).
 \end{equation*}
Locally, 
  \begin{equation*}\label{s11}
[X,Y] = \sum _{i,j=1}^{n}\Big( \frac{\partial Y^{i}}{\partial x_{j}}X^{j} 
 -  \frac{\partial X^{i}}{\partial x_{j}}Y^{j} \Big)
 \frac{\partial}{\partial x_{i}}.
 \end{equation*}
 If $[U, V] = 0,$ we say that the vector fields commute.
\end{definition}
 An extension of the usual directional derivative on the 
 Euclidean space to smooth manifold is linear connection.
\begin{definition}
A linear connection $\nabla$ on  $M$ is a map 
$\nabla : \Gamma (M) \times \Gamma (M) \rightarrow \Gamma (M)$ 
such that  $\nabla _{X}Y$ is $C^{\infty}(M)$ in $X$ and linear in $Y$ 
over the real field with 
 $\nabla _{X}(fY) = (Xf)Y + f\nabla _{X}Y, ~~ \forall f 
\in C^{\infty}(M).$ 
\end{definition}
We note that $\nabla _{X}Y$ is a new vector field which, roughly 
speaking, is the vector rate of change of $Y$ in the direction of $X.$ 
A particular connection on Riemannian manifolds that is torsion-free is 
the Levi-Civita connection. The Levi-Civita connection is defined in  
local coordinates as  
\[ \nabla _{X}Y = \sum _{i,k}^{n} X^{i}
\Big( \frac{\partial Y^{k}}{\partial x_{i}} + \sum _{j} 
\Gamma _{ij}^{k}W^{j}\Big) 
\frac{\partial}{\partial x_{k}}, \]
where 
\[ X = \sum _{i=1}^{n}X^{i}\frac{\partial}{\partial x_{i}}, ~~ 
Y = \sum _{k=1}^{n}Y^{k}\frac{\partial}{\partial x_{k}}  \]
and $\Gamma_{ij}^{k}$ are the Christoffel symbols defined by 
\[ \Gamma_{ij}^{k} = \frac{1}{2} \sum _{m}g^{km} 
\Big( \frac{\partial g_{jm}}{\partial x_{i}} + 
\frac{\partial g_{im}}{\partial x_{j}}
- \frac{\partial g_{ij}}{\partial x_{m}} \Big)  \]
where $(g^{km})$ is the inverse of $(g_{ij}).$
Note that $X$ is a Killing vector field if $\mathcal{L}_{X}g =0.$  
 
Let $M$ be a compact $n$-dimensional Riemannian manifold. 
A vector field on $M$ which generates isometries of the  Riemannian  
metrics is represented by a Killing vector field $v=v^{i}e_{i},$ where 
$e_{i}=\frac{\partial}{\partial x_{i}}.$  A Killing vector 
satisfies   the differential equation $v_{i}|_{j} = -  v_{j}|_{i}$ 
where solidus indicates covariant differentiation.  In particular, a 
Killing vector is divergence free with respect to the volume density 
$\sqrt{|g|},$ that is, 
\[ \frac{1}{\sqrt{|g|}}\frac{\partial}{\partial x_{i}}
\Big( \sqrt{|g|}v^{i}\Big) = v^{i}|_{i}= 0 \]
since a distance preserving map conserves volume automatically. Thus, 
a Killing vector, in particular, is an isometric flow. 
We also need to clarify the notion of tensor on $M.$
\begin{definition}
A tensor of type $(r, s)$ at $p \in M$ is a multi-linear function
\[ T: (T_{p}^{\ast}M)^{r} \times  (T_{p}M)^{s}  \rightarrow \RR .\]
A tensor field $\mathcal{T}$ of type $(r, s)$ is a smooth map, 
which assigns to each point $p \in  M$ an $(r, s)$-tensor 
$\mathcal{T}_{p}$ on 
$M$ at the point $p.$ In local coordinates,
\[ \mathcal{T} =  
\mathcal{T}_{j_{1}j_{2}\cdots j_{r}}^{i_{1}i_{2}\cdots i_{s}} 
dx^{j_{1}}\otimes dx^{j_{2}}\otimes  \cdots \otimes  dx^{j_{r}}\otimes
 \frac{\partial}{\partial x_{i_{1}}} \otimes \frac{\partial}{\partial 
 x_{i_{2}}} \otimes  \cdots \otimes 
 \frac{\partial}{\partial x_{i_{s}}} .\] 
 $\mathcal{T}$ acts on $r$ one-forms and $s$ vector fields  thus 
\begin{eqnarray*}
\mathcal{T}(\omega _{1}, \omega _{2}, \cdots , \omega _{r}, X_{1},
 X_{2} , \cdots , X_{s}) &=&  
\mathcal{T}_{j_{1}j_{2}\cdots j_{r}}^{i_{1}i_{2}\cdots i_{s}} 
dx_{j_{1}}(X_{1})dx_{j_{2}}(X_{2}) \cdots \\
&& dx_{j_{r}}(X_{r}) 
 \frac{\partial}{\partial x_{i_{1}}}(\omega _{1}) 
 \frac{\partial}{\partial x_{i_{2}}}(\omega _{2})  
 \cdots \frac{\partial}{\partial x_{i_{s}}}(\omega _{s}) \\
&=&
 \mathcal{T}_{j_{1}j_{2}\cdots j_{r}}^{i_{1}i_{2}\cdots i_{s}} 
X_{1}^{j_{1}} 
X_{2}^{j_{2}}
\cdots  
X_{r}^{j_{r}} 
\omega _{1}^{i_{1}}
\omega _{2}^{i_{2}}
\cdots 
\omega _{s}^{i_{s}} . 
\end{eqnarray*}
We say the tensor $\mathcal{T}$ is $s$ covariant and $r$ contravariant.
\end{definition}

Let $M$ be a compact $n$-dimensional manifold and let $\rho$ be a 
positive scalar density on $M.$ A $p$-tensor 
\[\omega = \omega ^{i_{1} \cdots i_{p}}e_{i_{1}}\wedge \cdots \wedge 
e_{i_{p}} \] 
is classically a skew-symmetric contravariant tensor of order $p;$ 
where $e_{i}=\frac{\partial}{\partial x_{i}}$ are the vectors of the 
coordinate frame (basis vectors). Let $T_{p}$ be the linear space 
of all $p$-tensors on all of the manifold, $M.$ We can now define 
the divergence of a tensor field. 
\begin{definition}
The divergence of a $p$-tensor $\omega$ written $\partial \omega$ is 
the $(p-1)$-tensor 
\begin{equation*}\label{divten}
\partial \omega = \frac{1}{\rho} 
\frac{\partial}{\partial x_{j}}\Big( \rho \omega ^{i_{1} \cdots i_{p}}
e_{i_{1}}\wedge \cdots \wedge e_{i_{p}} \Big). 
\end{equation*}
\end{definition}
Let  $V_{p}=\{ \omega \in T_{p}: \partial \omega =0 \}$
be the linear space of divergence-free $p$-tensors and let the linear 
space of $p$-tensor divergences be 
\[ D_{p}=\{ \omega \in T_{p}: \omega = \partial \omega ' ~~ 
\text{for some} ~~ \omega ' \in T_{p+1}   \} . \] 
For $p=0$ we have that the $p$-tensor is the ordinary scalar function. 
For such functions $f,$ we have that $\partial f =0.$ An easy 
calculation show that $\partial ^{2} - \partial \partial = 0,$ 
hence, $D_{p}$ is a linear subspace of $V_{p}.$ 
We have the following preliminary results in form of lemmas. 
\begin{lemma}\label{div2}
Let $X \in\Gamma(M),$ $T$ be an $(n,0)$-tensor field and $dV$ be the 
volume form  on $M.$ Let $\mathcal{L}_{X}$ be the Lie derivative of 
$T,$ then $\mathcal{L}_{X} dV = (\partial X )dV .$
\end{lemma}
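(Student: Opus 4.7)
The plan is to compute $\mathcal{L}_X dV$ directly in a local coordinate chart and match the result against the coordinate formula for $\partial X$ given in the preceding definition. The cleanest route is through Cartan's magic formula
\[
\mathcal{L}_X \omega = d(\iota_X \omega) + \iota_X(d\omega),
\]
applied to $\omega = dV$. Since $dV$ is a top-degree form on an $n$-manifold, $d(dV) = 0$, so the second term vanishes and the identity reduces to $\mathcal{L}_X dV = d(\iota_X dV)$. This isolates the whole computation inside the contraction $\iota_X dV$.

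Next I would work in a coordinate chart $(x^1,\dots,x^n)$ and use $dV = \sqrt{|g|}\, dx^1\wedge\cdots\wedge dx^n$. With $X = \sum_i X^i \partial/\partial x^i$, a direct expansion of the interior product yields
\[
\iota_X dV \;=\; \sqrt{|g|}\sum_{i=1}^{n} (-1)^{i-1} X^i\, dx^1\wedge\cdots\wedge \widehat{dx^i}\wedge\cdots\wedge dx^n,
\]
where $\widehat{dx^i}$ means the factor is omitted. Applying the exterior derivative, only the differential of the coefficient functions survives (the $dx^j$ with $j\neq i$ are already present), and the sign $(-1)^{i-1}$ cancels against the reordering sign needed to move $dx^i$ back into the $i$-th slot. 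The result collapses to
\[
d(\iota_X dV) \;=\; \sum_{i=1}^{n} \frac{\partial}{\partial x^i}\bigl(\sqrt{|g|}\, X^i\bigr)\, dx^1\wedge\cdots\wedge dx^n \;=\; \frac{1}{\sqrt{|g|}}\sum_{i=1}^{n} \frac{\partial}{\partial x^i}\bigl(\sqrt{|g|}\, X^i\bigr)\, dV.
\]

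Comparing the bracketed factor with the local expression $\partial X = \frac{1}{\sqrt{|g|}}\frac{\partial}{\partial x^j}(\sqrt{|g|}\,X^j)$ recorded in the divergence definition finishes the identification $\mathcal{L}_X dV = (\partial X)\, dV$. The only subtlety worth being careful about is the bookkeeping of the alternating signs when computing $\iota_X dV$ and then applying $d$; once those are handled, the statement is chart-independent because both sides are globally defined $n$-forms that agree on every chart, so the local identity promotes to a global one on $M$. I do not expect any genuine obstacle — the argument is essentially a one-line Cartan-formula computation plus a careful index manipulation.
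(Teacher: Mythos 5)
Your proof is correct, but it takes a genuinely different route from the paper. You invoke Cartan's magic formula $\mathcal{L}_X\omega = d(\iota_X\omega)+\iota_X(d\omega)$, kill the second term because $dV$ is a top-degree form, and then compute $d(\iota_X dV)$ explicitly in a chart; the paper instead treats $dV$ as a top-degree tensor and applies the general component formula for the Lie derivative of a covariant tensor, $(\mathcal{L}_X T)_{1\cdots n} = X^i\partial_i T_{1\cdots n} + T_{1\cdots n}\sum_i \partial_i X^i$, and then recognises $\partial_i(\sqrt{|g|}X^i)$ in the result. Your approach buys cleanliness: the antisymmetry of $dV$ is exploited once and for all through the interior product, the only bookkeeping is the single sign $(-1)^{i-1}$ that cancels when $d$ is applied, and the vanishing of $\iota_X(d\,dV)$ is automatic for degree reasons. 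The paper's approach is more elementary in that it needs no exterior calculus beyond the Lie derivative itself, but it requires the component formula for $\mathcal{L}_X$ acting on an $n$-index tensor and the observation that only the diagonal terms $T_{1\cdots i\cdots n}\,\partial X^i/\partial x^i$ survive, which is exactly the index manipulation your route avoids (and which the paper's write-up handles somewhat loosely). Both arguments land on the same local identity $\mathcal{L}_X dV = \frac{1}{\sqrt{|g|}}\,\partial_i\bigl(\sqrt{|g|}\,X^i\bigr)\,dV$, and your closing remark that the identity is chart-independent because both sides are globally defined $n$-forms is the right way to globalise it.
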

\begin{proof}
We recall that $T = dV = \sqrt{|g|}dx_{1} \wedge 
dx_{2}\wedge \cdots \wedge dx_{n}$ 
is an $(n,0)$-tensor field on $M.$ The Lie derivative $\mathcal{L}_{X}$ 
of $T$ given by $\mathcal{L}_{X}(T) = T_{12 \cdots n} dx_{1} 
\wedge dx_{2}\wedge \cdots \wedge dx_{n}$ 
is also an $(n,0)$-tensor or an $n$-form
$\mathcal{L}_{X}(T) = (\mathcal{L}_{X}T)_{12 \cdots n} dx_{1} \wedge 
dx_{2}\wedge \cdots \wedge dx_{n} .$ 
We need to  show that
$(\mathcal{L}_{X}T)_{12 \cdots n} = (\partial X) \sqrt{g}.$ 
Indeed, using the formula which gives the components of the Lie 
derivative of a tensor, we have
\[(\mathcal{L}_{X}T)_{12 \cdots n} =  
\frac{\partial T_{12 \cdots n}}{\partial x_{i}}X^{i} + 
T^{j_{1}2 \cdots n} \frac{\partial X^{1}}{\partial x_{j_{1}}} + 
T^{2j_{2} \cdots n} \frac{\partial X^{2}}{\partial x_{j_{2}}}
+ \cdots +
T^{12 \cdots j_{n}} \frac{\partial X^{n}}{\partial x_{j_{n}}}. \]
As 
$T_{12 \cdots j_{p} \cdots n} = \delta _{p,j_{p}}T_{12 \cdots p
 \cdots n},$ 
we get 
\begin{eqnarray*}
(\mathcal{L}_{X}T)_{12 \cdots n} &=&  
\frac{\partial T_{12 \cdots n}}{\partial x_{i}}X^{i} + 
T_{12 \cdots n} (\frac{\partial X^{1}}{\partial x_{1}} + \cdots + 
\frac{\partial X^{n}}{\partial x_{n}} ) \\
&=& \frac{\partial T_{12 \cdots n}}{\partial x_{i}}X^{i} + 
T_{12 \cdots n} \frac{\partial X^{i}}{\partial x_{i}}\\
&=& \frac{\sqrt{g}}{\partial x_{i}} + 
\sqrt{g}\frac{\partial X^{i}}{\partial x_{i}}
= \frac{\partial }{\partial x_{_{i}}}\Big( \sqrt{g}X^{i} \Big)\\
&=& \frac{1}{\sqrt{g}} \frac{\partial }{\partial x_{i}}
 \Big( \sqrt{g} X^{i} \Big)\sqrt{g} = (\partial X \sqrt{g}.
\end{eqnarray*}
Hence, 
 $\mathcal{L}_{X} T = \partial X \sqrt{g} dx_{1} \wedge dx_{1} \wedge 
\cdots \wedge dx_{n} = \partial X dV .$ 
\end{proof}

\begin{lemma}
Let $f \in C^{\infty}(M)$ and $X \in \Gamma (M).$ Then
$\partial(fX) = f \partial X+ g(\text{grad} f, X).$ 
\end{lemma}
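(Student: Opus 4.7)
The plan is to start from the local coordinate formula for the divergence already stated in the excerpt, namely
\[ \partial X = \frac{1}{\sqrt{|g|}} \frac{\partial}{\partial x_{j}}\bigl( \sqrt{|g|}\, X^{j} \bigr), \]
with summation over $j$. Applied to the vector field $fX$, whose components are $f X^{j}$, this gives
\[ \partial(fX) = \frac{1}{\sqrt{|g|}} \frac{\partial}{\partial x_{j}}\bigl( \sqrt{|g|}\, f X^{j} \bigr). \]

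Next, I would expand the derivative by the ordinary Leibniz rule, pulling the scalar factor $f$ through and leaving behind a $\partial f / \partial x_{j}$ term:
\[ \partial(fX) = f \cdot \frac{1}{\sqrt{|g|}} \frac{\partial}{\partial x_{j}}\bigl( \sqrt{|g|}\, X^{j} \bigr) + \frac{\partial f}{\partial x_{j}} X^{j}. \]
The first term is visibly $f\, \partial X$, so the whole identity reduces to recognising the second term.

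Finally, I would identify $(\partial f / \partial x_{j}) X^{j}$ with $X(f)$, since that is exactly how a vector field acts on a smooth function in local coordinates. By the defining property of the gradient recalled above the lemma, $X(f) = df(X) = g(\text{grad}\, f, X)$, which yields the desired formula. None of the steps present a real obstacle here; the only thing to be careful about is that the summation convention and the Leibniz rule for $\partial/\partial x_{j}$ acting on the product $\sqrt{|g|}\, f\, X^{j}$ are applied consistently, and that the scalar $f$ commutes with $1/\sqrt{|g|}$ so it can be extracted cleanly. The argument is coordinate-local but, since both sides of the identity are tensorial (a scalar function on $M$), the equality extends globally.
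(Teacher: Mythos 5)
Your proof is correct and follows essentially the same route as the paper's: expand $\partial(fX)$ via the local formula $\frac{1}{\sqrt{|g|}}\partial_{x_j}(\sqrt{|g|}\,fX^j)$, apply the Leibniz rule, and identify the leftover term $\frac{\partial f}{\partial x_j}X^j$ with $g(\mathrm{grad}\,f,X)$ via the defining property of the gradient. If anything, your write-up is cleaner, since the paper's displayed computation contains some typographical slips and appends an unnecessary appeal to the Lie-derivative identity at the end.
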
 
\begin{proof}
From the definition of the $\partial$ operator, we have 
\begin{eqnarray*}
\partial(fX) &=& 
\frac{1}{\sqrt{g}} \frac{\partial }{\partial x_{j}}
 \Big( \sqrt{g} f X^{j} \Big)\sqrt{g} \\
 &=&  
 \frac{1}{\sqrt{g}} \frac{\partial f}{\partial x_{j}}\sqrt{g}X^{j} + 
 f \frac{1}{\sqrt{g}} \frac{\partial f}{\partial x_{j}}(\sqrt{g}X^{j}) \\
&=& \frac{\partial f}{\partial x_{j}} X^{j} +  f \partial X  \\
&=& g_{kj}(\text{grad} f)^{k}X^{j} + f \partial X \\
&=& g(\text{grad} f , X) + f \partial X.
\end{eqnarray*}
Using that $\mathcal{L}_{X}dV = (\partial X)dV~~ \forall X \in 
\Gamma(M),$ the result follows.
\end{proof}

We define the adjoint operator $\del :  \Omega ^{k} 
\rightarrow \Omega ^{k+1}$ of $d$ by requiring that
$\lka \omega , \del \beta \rka  = \lka d \omega , 
\beta \rka , ~ \forall 
\omega \in  \Omega ^{k-1}, ~~ \text{and} ~~ \beta \in \Omega ^{k}$ 
using the $L^{2}$ inner product  on $\Omega ^{k}(M).$ 
From the metric $g,$ we can also define the Hodge star operator 
$\ast : \Omega ^{k} \rightarrow  \Omega ^{n-k}$ 
by requiring that for all 
$\beta , \gamma \in  \Omega ^{k}, ~~ 
\beta \wedge \ast \gamma = (\beta , \gamma )dV.$ 
Notice that the Hodge star operator $\ast$ is linear and point-wise. 
Therefore, the inner product on $\Omega ^{k}$ can be written as 
$\lka \beta , \gamma \rka = \int _{M} \beta
 \wedge \ast \gamma . $ So, one can find an expression for $\del $  
 acting on one-forms. Given $f \in \Omega ^{0}$ and 
 $\omega \in \Omega ^{1},$ we 
have 
\begin{eqnarray*}
\lka f , \del \omega \rka &=& \lka df ,  \omega \rka = 
\int _{M} df \wedge \ast \omega \\
&=& \int _{M}[ d (f \wedge\ast \omega ) 
- f \wedge d \ast \omega ] \\
&=& \int _{M}-f \wedge d \ast \omega = -  \int _{M} f 
\wedge \ast ^{2} d \ast \omega \\
&=& \int _{M} f \wedge \ast (- \ast d \ast ) \omega = \lka f, (- \ast 
d \ast ) \omega \rka . 
\end{eqnarray*} 
Now we can go on to construct vorticity through the curl operator on 
the compact Riemannian manifold $M$ and study their flow vorticity.

\section{Vorticity of flows}
Let $X$ be a vector field on an open subset of $U \subset M$ and 
$\omega _{X}$ be the associated $1$-form  on $U$  dual to $X.$ So,  for 
each $p \in U,$ the linear functional $\omega _{X}(p)
\in T^{\ast}M$ on $T_{p}M$ is $\lka X(p) , \cdot \rka .$ 
It follows that the assignment $X \mapsto \omega _{X}$ is additive 
in $X$ and linear with respect to multiplication by smooth function 
on $U.$ Let $V$  be  a finite dimensional vector space with inner 
product $\lka , \rka ,$ then the dual vector space $V^{\ast}$ is 
naturally isomorphic  to $V$ under the map 
\[ \alpha : V \rightarrow V^{\ast} , ~~ \text{with}~~ 
\alpha (v) = v^{\ast} \in V^{\ast} , \text{satisfying} ~~ 
v^{\ast} (w) = \lka v , w \rka  ~ \forall v,w \in V .\]

These lead to define the curl operator taking vector fields to vector 
fields to be 
\[ \text{curl} X = \alpha ^{-1}\ast d \alpha X = g^{-1}\ast d g(X) .\]
In coordinate form, the curl of a vector field $X$ on a Riemannian 
manifold $M$ is a $2$-covariant antisymmetric tensor $A$ with the 
components $A_{ij}$  given by 
\[A_{ij}= X_{i;j}-X_{j;i} = \frac{\partial X_{i}}{\partial x_{j}} 
- \frac{\partial X_{j}}{\partial x_{i}}.\]
In particular, a Riemannian metric on a manifold $M$ is an 
assignment of inner product on each cotangent space 
$T^{\ast}_{p}M$ under the isomorphism $\alpha .$ The inner product 
$g$ induces an inner 
product on each of the tensor product $T_{p}M \otimes \cdots \otimes 
T_{p}M .$ 
\begin{definition}
Let $X \in \Gamma(U)$ with $U$ open in $M,$ the function 
$\text{div}(X) \in C^{\infty}(U)$  is characterised by 
\[ d(\ast (\omega _{X})) = \text{div}(X)dV_{M}\Big|_{U}.  \]
\end{definition}
To see this, let $M=\BR$ with the standard flat Riemannian metric and 
orientation in the standard linear coordinates 
$\{ x_{1}, \cdots , x_{n}\}.$ If $X=\sum X_{j}\partial _{x_{j}} 
\in \Gamma(U)$ is a vector field, since the volume form determined by 
this metric and orientation is $dx_{1}\wedge \cdots\wedge dx_{n},$ 
we have
\begin{eqnarray*}
d(\ast (\omega _{X})) &=& d(\ast (\sum X_{j}dX_{j})) 
= d(\sum X_{j} \ast (dX_{j})) \\
&=& d(\sum (-1)^{j-1}X_{j}dx_{1}\wedge \cdots \wedge 
\widehat{dx}_{n} \wedge  
\cdots  \wedge dx_{n})\\
&=& \sum (-1)^{j-1}dX_{j}\wedge dx_{1}\wedge \cdots \wedge 
\widehat{dx}_{n} \wedge  \cdots  \wedge dx_{n} \\
&=& \sum \Big(\frac{\partial X_{j}}{\partial x_{j}} \Big) 
dx_{1}\wedge \cdots   \wedge dx_{n} 
=  \sum \Big(\frac{\partial X_{j}}{\partial x_{j}}\Big)  dV_{M}
= \sum _{j=1}^{n}\frac{\partial X_{j}}{\partial x_{j}} 
= \text{div} (X) . 
\end{eqnarray*}

Our definition of divergence of a vector field was intrinsic 
to the Riemannian structure and orientation, thus, we can likewise 
compute the divergence in any oriented coordinate system on $M.$  We 
make the next definition. 
\begin{definition}
Let $M$ be a Riemannian manifold with corners. For an open 
$U \subseteq M$ and $f \in C^{\infty(U)},$ the smooth vector, 
$\text{grad}(f),$ on $U$ is defined by the condition:  
$\omega _{\text{grad}(f)} =df \in \Omega _{M}^{1}(U).$ 
That is, for each $p \in U,$ we have 
$\lka \text{grad}(f) , \cdot \rka = df(p)$  
is a linear functional on $T_{p}M.$
\end{definition}

Now let $M$ be a Riemannian manifold without boundary and $N$ be an 
oriented submanifold with boundary inside of $M$ with constant dimension 
$1.$ Let $N$ be given the induced Riemannian metric of $M.$ So the 
boundary $\partial N$ is assigned a collection of signs 
$\mathcal{E}(p) \in \{ \pm 1 \}$ for each $p \in \partial N$ where 
$\mathcal{E}$ is the usual Levi-Civita symbol defined by 
\[ \mathcal{E}_{a_{1}, \cdots ,a_{n}} = 
\begin{cases}
+ 1, & \text{if} ~~(a_{1}, \cdots, a_{n})~~\text{is an even permutation 
of }~~ 1,2, \cdots , n, \\
- 1, & \text{if} ~~ (a_{1}, \cdots, a_{n})~~\text{is an odd permutation 
of }~~ 1,2, \cdots , n ,\\
0, & \text{if otherwise} ;
\end{cases}\]
see e.g. \cite{Bar} and \cite{Bus}. Let $dl$ be the length form 
on $N$ and $T$ be the tangent field dual to $dl.$ It can be proved 
that for any $f \in C^{\infty}(M)$ for which $f\Big|_{N} 
\in C^{\infty}(N)$ is compactly supported, the smooth inner product 
function $\lka \text{grad}(f)\Big|_{N}, T \rka$ is compactly supported 
on $N$ and 
\[ \int _{N}  \lka \text{grad}(f)\Big|_{N}, T \rka dl = \sum _{p \in 
\partial N} \mathcal{E}(p) f(p). \]

In this way, we see that vector fields  give  rise to one-parameter 
groups of point transformations of the the manifold and one may be 
interested in those point transformations that are divergence-free.  
We call such vectors and their associated transformations simply 
``flows".  The next lemma ensures the existence of vector fields 
generating flows on $M.$
\begin{lemma}\label{recthm}
Let $V$ be a nonzero vector field at a point $p$ on
the manifold $M.$ Then there exists a system of coordinates 
$(\bar{x}^{1},\bar{x}^{2}, \cdots , \bar{x}^{n})$ about $p$ such
that there is $j \in \{1, \cdots , n\}$ for which 
$V = \frac{\partial}{\partial \bar{x}_{j}} .$  
We call this Rectification lemma. 
\end{lemma}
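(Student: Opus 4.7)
The plan is to prove the rectification (flow-box) lemma in the classical way: start in arbitrary local coordinates around $p$, use the local flow of $V$ to build a candidate chart, and then invoke the inverse function theorem to conclude that it really is a chart. In the new coordinates the flow of $V$ becomes translation in one direction, which forces $V$ to coincide with the corresponding coordinate vector field.

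First I would pick a chart $(U, \phi)$ with $\phi(p)=0$ and coordinates $(x^{1},\ldots,x^{n})$, and write $V = \sum_{i} V^{i}(x)\,\partial/\partial x^{i}$. Since $V(p)\neq 0$, some component $V^{j}(p)$ is nonzero; after relabelling coordinates I may assume $V^{1}(p)\neq 0$. Next I would pull back the vector field to $\phi(U) \subset \mathbb{R}^{n}$, invoke the local existence and uniqueness theorem for ordinary differential equations, and obtain a smooth local flow $\Phi_{t}(q)$ defined for $|t|<\varepsilon$ and $q$ in a neighbourhood of $0$.

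The core construction is the smooth map
\[
\Psi(t, u^{2}, \ldots, u^{n}) \;=\; \Phi_{t}\bigl(0, u^{2}, \ldots, u^{n}\bigr),
\]
defined on a neighbourhood of $0$ in $\mathbb{R}^{n}$. I would then compute its differential at the origin: for $i\geq 2$, $\partial\Psi/\partial u^{i}|_{0}$ is simply $\partial/\partial x^{i}$ because at $t=0$ the flow is the identity, while $\partial\Psi/\partial t|_{0} = V(0) = (V^{1}(0),\ldots,V^{n}(0))$ by the very definition of the flow. The Jacobian at the origin is therefore the column vector $V(0)$ followed by the last $n-1$ standard basis vectors, whose determinant equals $V^{1}(0)\neq 0$. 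The inverse function theorem then yields a smooth local inverse $\Psi^{-1}$, producing new coordinates $(\bar{x}^{1},\ldots,\bar{x}^{n})$ around $p$ with $\bar{x}^{1}=t$ and $\bar{x}^{i}=u^{i}$ for $i\geq 2$.

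It remains to verify that $V=\partial/\partial\bar{x}^{1}$ in these coordinates. This follows from the flow identity $\Phi_{s}\circ\Psi(t,u^{2},\ldots,u^{n})=\Psi(t+s,u^{2},\ldots,u^{n})$, so in the $\bar{x}$-chart the flow of $V$ is precisely translation in the $\bar{x}^{1}$-direction; differentiating at $s=0$ identifies $V$ with $\partial/\partial\bar{x}^{1}$, giving the lemma with $j=1$. The step I expect to require the most care is the Jacobian calculation together with the clean verification that the flow really translates the first coordinate; once the inverse function theorem applies, the identification of $V$ with the coordinate vector field is immediate.
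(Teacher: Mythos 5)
Your proof is correct, and it is the standard flow-box argument: straighten $V$ by using its local flow $\Phi_t$ to build the map $\Psi(t,u^2,\ldots,u^n)=\Phi_t(0,u^2,\ldots,u^n)$, check that its Jacobian at the origin has determinant $V^1(0)\neq 0$, invert it, and read off $V=\partial/\partial\bar{x}^1$ from the translation property of the flow. This is, however, a genuinely different route from the paper's. The paper's proof does not construct any coordinates at all: it merely asserts that a compact manifold sits in an ambient $\mathbb{R}^{n+1}$, that charts are homeomorphisms, and that transition Jacobians have full rank, and declares the lemma proved. None of that engages with the vector field $V$ or explains why some chart makes $V$ a coordinate vector field, so your argument is substantially more complete; the existence of full-rank transition maps is a property of the smooth structure and by itself cannot single out coordinates adapted to a given $V$. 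One point of care specific to this paper's logical ordering: the proposition immediately following the lemma (local existence and uniqueness of integral curves) is \emph{deduced from} the rectification lemma, whereas your proof of the lemma invokes existence and uniqueness for ODEs. There is no actual circularity provided you cite the Euclidean Picard--Lindel\"of theorem for the pulled-back system on $\phi(U)\subset\mathbb{R}^n$ (together with smooth dependence on initial conditions, which you need for $\Psi$ to be smooth), rather than the paper's later proposition; it would be worth saying this explicitly. With that caveat your write-up is the better of the two, and the Jacobian computation and the identity $\Phi_s\circ\Psi(t,u)=\Psi(t+s,u)$ are exactly the steps that need to be, and are, spelled out.
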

\begin{proof}
This lemma follows from the fact that in a compact Riemannian 
manifold, $M,$ if $p \in M$ there is an open neighbourhood $V$ of $p$ 
in the ambient manifold $\RR ^{n+1}$ such that if $U \subset \BR$ is 
open and $\phi : U \rightarrow \RR ^{n+1}$ is smooth, then 
$\phi (U)$ is a homomorphism. Besides, any transition Jacobian on $U$ 
for change of coordinates has full rank for every $p \in U.$  This proves the lemma.
\end{proof}

Given a vector field $X,$ consider the system
\begin{equation}\label{s6}
\frac{d c^{k}}{dt} = X^{k}(c(t)), ~~ k= 1,2, \cdots , n,
\end{equation}
where $c(t)$ is the integral curve associated with $X.$ 
The next result shows that the system \eqref{s6} can be solved locally 
around the point $x_{0}=c(0), $ for $0<t<\epsilon .$ 

\begin{proposition}[Existence and uniqueness]
Given $x_{0} \in M$ and let $X$ be a nonzero
vector field on an open set $U \subset M$ of $x_{0},$ 
then there is 	 $\epsilon > 0$ such that the system \eqref{s6}
has a unique solution $c:[0,\epsilon ) \rightarrow U$ such that 
$c(0) = x_{0}.$ 
\end{proposition}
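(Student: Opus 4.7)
The plan is to reduce the statement to the classical Picard--Lindel\"of theorem in $\RR^n$ by passing to a coordinate chart, then transfer the resulting integral curve back to $M$. Since the system \eqref{s6} is already written in local coordinates, essentially all of the analytic content lives in Euclidean space; only the final assembly is manifold-theoretic.

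First I would choose a coordinate chart $(V,\phi)$ around $x_0$ with $V\subset U$ and $\phi(V)\subset\RR^n$ open. Since $X$ is smooth on $U$, its component functions $X^k$ are smooth on $V$, so the pushed-forward vector field $\tilde X=(\tilde X^1,\dots,\tilde X^n)$ on $\phi(V)$, with $\tilde X^k=X^k\circ\phi^{-1}$, is smooth. Choose a closed ball $\overline{B_r(\phi(x_0))}\subset\phi(V)$; on this compact set $\tilde X$ is bounded (say $|\tilde X|\le M_0$) and, being $C^1$, satisfies a Lipschitz estimate with some constant $L$. Set $\tilde c(t)=\phi(c(t))$, so that \eqref{s6} in the chart reads
\[
\frac{d\tilde c^k}{dt}=\tilde X^k(\tilde c(t)),\qquad \tilde c(0)=\phi(x_0),\qquad k=1,\dots,n.
\]

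Next I would convert this Cauchy problem to the integral equation $\tilde c(t)=\phi(x_0)+\int_0^t \tilde X(\tilde c(s))\,ds$ and apply the Banach fixed-point theorem on the complete metric space of continuous curves $[0,\epsilon]\to\overline{B_{r/2}(\phi(x_0))}$ with the uniform norm. Two choices must be balanced: $\epsilon$ small enough that $\epsilon M_0\le r/2$, so the map preserves the ball, and $\epsilon L<1$, so it is a contraction. Any $\epsilon<\min\{r/(2M_0),1/(2L)\}$ works, yielding a unique continuous, hence smooth (by bootstrapping using $\dot{\tilde c}=\tilde X(\tilde c)$), fixed point $\tilde c:[0,\epsilon)\to\phi(V)$.

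Finally I would set $c=\phi^{-1}\circ\tilde c:[0,\epsilon)\to V\subset U$. By construction $c(0)=x_0$ and the chain rule combined with the defining equation for $\tilde c$ yield \eqref{s6}. Uniqueness of $c$ follows from uniqueness of $\tilde c$ together with the bijectivity of $\phi$ on $V$: any other solution $c'$ with $c'(0)=x_0$ remains in $V$ on a possibly smaller interval by continuity, its image $\phi\circ c'$ satisfies the same integral equation as $\tilde c$, and therefore coincides with $\tilde c$ where both are defined. The only delicate point is the coordinate-independence of the whole construction; this is mild, because equation \eqref{s6} is exactly the coordinate expression of the intrinsic condition $\dot c(t)=X(c(t))$, and the transition functions between charts preserve smoothness and the Lipschitz property, so the solution obtained is independent of the chart chosen around $x_0$. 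I expect no deeper obstacle: the essential work is the standard Picard iteration, and the Rectification Lemma~\ref{recthm} could even be invoked to trivialise the ODE to $\dot{\bar c}^j=1$ in suitable coordinates, though the direct Picard argument is cleaner.
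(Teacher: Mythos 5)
Your proof is correct, but it takes a genuinely different route from the paper. The paper deduces the proposition from its Rectification Lemma~\ref{recthm}: it straightens $X$ into $\partial/\partial\bar{x}^{n}$ via a coordinate change and writes down the explicit solution $\bar{c}^{k}(t)=\bar{x}^{k}_{0}$ for $k<n$, $\bar{c}^{n}(t)=t+\bar{x}^{n}_{0}$, then pulls back. You instead run the standard Picard--Lindel\"of argument directly in a chart: push $X$ forward, get boundedness and a Lipschitz constant on a compact ball, and apply the Banach fixed-point theorem to the equivalent integral equation, balancing $\epsilon M_{0}\le r/2$ against $\epsilon L<1$. Your route is more work but is self-contained and, arguably, logically cleaner: in standard treatments the rectification (flow-box) theorem is itself \emph{proved} using existence, uniqueness, and smooth dependence of integral curves, so deriving existence and uniqueness from rectification risks circularity unless the lemma is established independently --- and the paper's own proof of Lemma~\ref{recthm} is too sketchy to supply that independent argument. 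What the paper's approach buys, when the rectification lemma is available, is brevity and an explicit formula for the flow; what yours buys is a foundation that does not presuppose the theory it is meant to establish. Your closing remark that one ``could even'' invoke Lemma~\ref{recthm} to trivialise the ODE is exactly the paper's argument, so you have in effect subsumed both proofs.
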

\begin{proof}
By the rectification lemma, there is a local change 
of coordinates $\bar{x} = \phi (x)$ such that the system \eqref{s6} 
becomes
\[\frac{d c^{k}}{dt} = \delta _{kn}, ~~ k= 1,2, \cdots , n  ~~
\text{where} ~~ \bar{c}=\phi (c).\]
This system has a unique solution through the point 
$\bar{x}_{0}=\phi (x_{0})$ given by  
\[\bar{c}^{k}(t) = \bar{x}^{k}_{0},
~~ k= 1,2, \cdots , n-1 ~~ \text{and} ~~ 
\bar{c}^{n}(t) = t + \bar{x}^{n}_{0}.\]
Hence this will hold also for the system \eqref{s6} 
in a neighbourhood of $x_{0}=\phi ^{-1}(\bar{x}_{0}).$
\end{proof}

Let $f_{j}:=\frac{\partial f}{\partial x_{j}},$ and $f^{i}:=
g^{ij}f_{j}$ so that $\nabla f 
= f^{i}\frac{\partial }{\partial x} 
=g^{ij}\frac{\partial f}{\partial x_{j}}\frac{\partial }{\partial x} ,$ 
we have more propositions that will lead us to the main results of 
this work.
\begin{proposition}
If $X \in \Gamma (M)$ then $X = \text{grad}\phi \iff \text{curl}X =0$ 
and $tr (\text{curl}X)=0.$
\end{proposition}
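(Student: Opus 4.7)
The plan is to prove the biconditional by treating each direction separately, using the coordinate expression $A_{ij} = \partial X_i/\partial x_j - \partial X_j/\partial x_i$ for the curl tensor developed in the preceding section.

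For the forward direction, I would assume $X = \text{grad}\,\phi$ for some $\phi \in C^{\infty}(M)$. By the defining property of the gradient, the associated $1$-form $\omega_X = \alpha(X)$ equals $d\phi$, so in local coordinates $X_i = \partial \phi / \partial x_i$. Substituting into the antisymmetric expression for $A_{ij}$ and invoking equality of mixed partial derivatives gives $A_{ij} \equiv 0$, whence $\text{curl}\,X = 0$. The trace condition $\mathrm{tr}(\text{curl}\,X) = 0$ is then automatic from the antisymmetry $A_{ij} = -A_{ji}$.

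For the backward direction, suppose $\text{curl}\,X = 0$. Using the intrinsic form $\text{curl}\,X = g^{-1}\ast d\,g(X)$, this is equivalent to $d\omega_X = 0$, so the $1$-form $\omega_X$ is closed. Locally the Poincaré lemma furnishes a smooth $\phi$ on any contractible neighbourhood with $d\phi = \omega_X$, yielding $X = \text{grad}\,\phi$ on that chart. To globalise the conclusion I would appeal to the Helmholtz/Hodge decomposition on the compact oriented Riemannian manifold $M$: every $1$-form decomposes as $\omega_X = d\phi + \delta\beta + h$ with $h$ harmonic. Applying $d$ and using closedness gives $d\delta\beta = 0$, and then $\|\delta\beta\|^2 = \langle d\delta\beta,\beta\rangle = 0$ by adjointness, so $\delta\beta = 0$. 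Provided $H^1(M;\mathbb{R}) = 0$ (an implicit topological hypothesis I would expect here), the harmonic component $h$ also vanishes and global exactness follows.

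The principal obstacle is exactly this passage from local to global existence of the potential $\phi$: on a general compact Riemannian manifold, closed $1$-forms need not be exact (consider the flat torus), so without a cohomological assumption the statement cannot hold in full generality. I would therefore check whether the authors intend this result locally on coordinate charts, in which case the Poincaré lemma alone suffices, or globally, in which case I would add the hypothesis $H^1(M;\mathbb{R}) = 0$ and invoke the Hodge decomposition as sketched.
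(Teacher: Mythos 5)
Your forward implication is exactly the paper's argument: substitute $X_i=\partial\phi/\partial x_i$ into $A_{ij}$ and cancel mixed partials. (The paper also verifies $\mathrm{tr}(\mathrm{curl}\,X)=g^{ij}(X_{i;j}-X_{j;i})=0$ by a separate computation, whereas you note the trace clause is automatic from antisymmetry against the symmetric metric --- which is correct, and shows that clause carries no content.) On the converse, your local step coincides with the paper's: vanishing of $A_{ij}$ makes $\omega_X$ closed, and the Poincar\'e lemma gives a potential on each contractible chart. Where you genuinely depart from the paper is in treating the local-to-global passage honestly. The paper's proof declares the closed form ``exact,'' produces only a \emph{locally defined} function with $\omega=df$, and then asserts the global conclusion $X=\mathrm{grad}\,\phi$ without further argument; your Hodge-decomposition repair (kill the coexact part via $\|\delta\beta\|^2=\langle d\delta\beta,\beta\rangle=0$, then assume $H^1(M;\mathbb{R})=0$ to kill the harmonic part) is the right way to close that gap, and your implicit flat-torus example shows the cohomological hypothesis is not removable: $dx^1$ on $T^n$ is closed but not exact, so the dual vector field is curl-free but is not a gradient. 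In short, your proposal is correct and strictly more careful than the paper's own proof, which contains exactly the defect you flagged.
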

\begin{proof}
Let $X = \text{grad}\phi ,$ then 
$X^{k}=g^{kj}\frac{\partial \phi}{\partial x_{j}}$ and $X_{i}=
\frac{\partial \phi}{\partial x_{i}}.$ So we have 
\[ (\text{curl}X)_{ij} =  
\frac{\partial X_{i}}{\partial x_{j}} 
- \frac{\partial X_{j}}{\partial x_{i}} = 
\frac{\partial ^{2}\phi}{\partial x_{j}\partial x_{i}} - 
\frac{\partial ^{2}\phi}{\partial x_{i}\partial x_{j}}  =0. \]
 
Conversely, let $X \in \Gamma(M)$ such that $\text{curl}X =0.$  Then, 
$\frac{\partial X_{i}}{\partial x_{j}} = 
\frac{\partial X_{j}}{\partial x_{i}}.$ Thus the $1$-form 
$\omega = \sum X_{k}dx_{k}$ is exact. This means there is a locally 
defined function such that 
$\omega = df = \sum  \frac{\partial f}{\partial x_{k}} dx_{k}.$  
Thus, $X_{k}=X^{j}.$  Besides, $tr (\text{curl}X) = g^{ij}(X_{i;j}-
X_{j;i}) = X_{;j}^{j}- X_{;i}^{i} = 0 ;$ which completes the proof. 
\end{proof}

Let $\lka , \rka$ be the Riemannian metric with associated  Levi-Civita 
connection $\nabla .$ We prove global and 
invariant properties of the curl operator on $M.$ 
\begin{proposition}
If $A = \text{curl} X,$ then  
$ A(U,V) = \lka \nabla _{V}X , U \rka  - 
\lka \nabla _{U}X , V \rka ~~ \forall U,V \in \Gamma (M). $
\end{proposition}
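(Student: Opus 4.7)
The plan is to unwind the intrinsic right-hand side into local coordinates and check that it reproduces the coordinate formula $A(U,V) = A_{ij}U^i V^j$ with $A_{ij} = X_{i;j} - X_{j;i}$ that was set down just before the proposition. The bridge between the intrinsic and coordinate pictures is metric compatibility of the Levi-Civita connection, and the overall calculation is essentially a two-line relabeling once that bridge is in place.

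Concretely, I would first write $X = X^k \partial_k$, $U = U^i \partial_i$, $V = V^j \partial_j$, and recall that $X_i := g_{ik}X^k$ are the components of the $1$-form $\omega_X$ dual to $X$. Because $\nabla g = 0$, one has $g_{ik}(\nabla_j X)^k = \nabla_j X_i = X_{i;j}$, so
\begin{equation*}
\langle \nabla_V X, U\rangle \;=\; g_{ik}\,V^j(\nabla_j X)^k\,U^i \;=\; V^j U^i X_{i;j}.
\end{equation*}
Swapping the roles of $U$ and $V$ gives $\langle \nabla_U X, V\rangle = U^i V^j X_{j;i}$, and subtracting yields
\begin{equation*}
\langle \nabla_V X, U\rangle - \langle \nabla_U X, V\rangle \;=\; U^i V^j\bigl(X_{i;j} - X_{j;i}\bigr) \;=\; A_{ij}U^i V^j \;=\; A(U,V),
\end{equation*}
which is the desired identity. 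As a sanity check, I would also note that the symmetric Christoffel contributions in $X_{i;j}$ cancel in the antisymmetric combination, recovering the partial-derivative form used in the definition of curl above.

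The only real obstacle is a bookkeeping one: verifying that the ``covariant derivative commutes with lowering an index'' step is legitimate here, i.e.\ carefully invoking metric compatibility of $\nabla$ rather than a coincidence of flat-space notation. If I preferred a coordinate-free argument, the alternative would be to identify $A$ with $d\omega_X$ (up to sign) and apply the invariant formula $d\omega_X(U,V) = U(\omega_X(V)) - V(\omega_X(U)) - \omega_X([U,V])$, then expand each term using metric compatibility and torsion-freeness $\nabla_U V - \nabla_V U = [U,V]$ so that the $[U,V]$ terms cancel; this route is elegant but requires first recording the precise relationship between $A$ and $d\omega_X$, so the direct coordinate computation above is the most economical fit with the conventions already in force.
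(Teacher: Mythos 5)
Your proposal is correct and is essentially the paper's own proof read in the reverse direction: both arguments expand everything in local coordinates, use metric compatibility to identify $\lka \nabla_V X, U\rka$ with $U^iV^jX_{i;j}$, and antisymmetrize to recover $A_{ij}U^iV^j$. Your explicit flagging of the index-lowering step ($\nabla g = 0$) is a worthwhile clarification of a point the paper passes over silently.
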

\begin{proof}
For every $U, V \in \Gamma(M) ,$  we have 
\begin{eqnarray*}
A(U,V) &=& A_{ij}U^{i}V^{j}=(X_{i;j} - X_{j;i})U^{i}V^{j}
= (\nabla _{\partial _{j}}X)_{i}U^{i}V^{j} - 
(\nabla _{\partial _{i}}X)_{j}U^{i}V^{j} \\
&=&
\lka \nabla _{\partial _{j}}X , U\rka V^{j} - 
\lka \nabla _{\partial _{i}}X , U\rka U^{i}
= \lka \nabla _{V^{j}\partial _{j}}X , U\rka - 
\lka \nabla _{U^{i}\partial _{j}}X , V\rka \\
&=& \lka \nabla _{V}X , U \rka  - 
\lka \nabla _{U}X , V \rka .
\end{eqnarray*}
\end{proof}

\begin{proposition}\label{s8.6L}
 Let $A = \text{curl} X,$ where $X \in \Gamma(M).$ 
 Then  
\[A(U,V) = V\lka X , U \rka  - U \lka X , V \rka 
+ \lka X , [U , V] \rka . \]
\end{proposition}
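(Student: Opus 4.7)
The plan is to reduce the statement to the previous proposition and then apply the two defining properties of the Levi-Civita connection, namely compatibility with the metric and torsion-freeness. Since the previous proposition already gives
\[ A(U,V) = \lka \nabla_V X , U \rka - \lka \nabla_U X , V \rka, \]
I only need to rewrite the two terms on the right using the metric compatibility identity $W\lka Y, Z\rka = \lka \nabla_W Y, Z\rka + \lka Y, \nabla_W Z\rka$, which gives
\[ \lka \nabla_V X, U \rka = V\lka X, U \rka - \lka X, \nabla_V U\rka, \qquad \lka \nabla_U X, V \rka = U\lka X, V \rka - \lka X, \nabla_U V\rka. \]

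Subtracting these two equations yields
\[ A(U,V) = V\lka X, U\rka - U\lka X, V\rka + \lka X, \nabla_U V - \nabla_V U\rka. \]
At this point the torsion-free condition of the Levi-Civita connection enters: $\nabla_U V - \nabla_V U = [U,V]$. Substituting this identity into the expression above gives precisely the claimed formula $A(U,V) = V\lka X, U\rka - U\lka X, V\rka + \lka X, [U,V]\rka$.

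There is essentially no obstacle here; the argument is a short bilinear bookkeeping exercise. The only point worth flagging is that this derivation relies crucially on the choice of $\nabla$ as the Levi-Civita connection (for both metric compatibility and zero torsion), which has already been fixed in the preceding section, so no extra hypothesis is needed. The resulting formula is the coordinate-free counterpart of the local expression for $\text{curl}\,X$ given earlier, and in particular it makes the tensoriality of $A$ in $(U,V)$ manifest, since every term on the right transforms correctly under multiplication of $U$ or $V$ by a smooth function (the derivative terms produced by the Leibniz rule on the first two summands cancel against the $[U,V]$ correction).
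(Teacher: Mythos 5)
Your proof is correct and follows essentially the same route as the paper's: both invoke the preceding proposition's identity $A(U,V)=\lka \nabla_V X, U\rka - \lka \nabla_U X, V\rka$, apply metric compatibility of $\nabla$ to each term, and use torsion-freeness to identify $\nabla_U V - \nabla_V U$ with $[U,V]$. The only difference is cosmetic (you solve for the connection terms and substitute, while the paper subtracts the two compatibility identities directly), and your sign bookkeeping is right.
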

\begin{proof}
Since $\nabla$ is a metric connection
\[V \lka X , U \rka   = \lka \nabla _{V}X , U \rka + 
\lka X, \nabla _{V} U \rka ~~ \text{and} ~~  
U\lka X , V \rka   = \lka \nabla _{U}X , V \rka + 
\lka X, \nabla _{U} V \rka .\]
Using the symmetry of $\nabla,$ subtracting we obtain
\[V \lka X , U \rka  - U \lka X , V \rka  = A(U,V) 
+ \lka X, [V,U]\rka ,\] 
which  proves the claim.
\end{proof}
 
The following result shows the relation between the curl, 
Levi-Civita connection and the Lie derivative.
\begin{proposition}\label{thmC}
If $A = \text{curl} X$ and $\nabla$ is the Levi-Civita connection on 
$M,$ then 
\[A(U,V) = 2 \lka \nabla _{V}X, U\rka - (L_{X}g)(U,V). \]
\end{proposition}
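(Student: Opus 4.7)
The plan is to combine two ingredients: the expression for $A(U,V)$ already obtained in the proposition just above (so that the $\nabla_V X$ term can be isolated), and a standard formula computing $(\mathcal{L}_X g)(U,V)$ in terms of the Levi-Civita connection. The target identity then reduces to an algebraic comparison of three bilinear expressions in $U, V$.

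First I would recall that the preceding proposition gives
\[
A(U,V) \;=\; \langle \nabla_V X, U\rangle \;-\; \langle \nabla_U X, V\rangle,
\]
so rewriting the claim as $\langle \nabla_U X, V\rangle + \langle \nabla_V X, U\rangle = (\mathcal{L}_X g)(U,V)$ is what remains to be established.

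Next I would expand $(\mathcal{L}_X g)(U,V)$ using the tensorial formula for the Lie derivative,
\[
(\mathcal{L}_X g)(U,V) \;=\; X\langle U, V\rangle \;-\; \langle [X,U], V\rangle \;-\; \langle U, [X,V]\rangle,
\]
then invoke the two defining properties of the Levi-Civita connection: metric compatibility, which rewrites $X\langle U,V\rangle$ as $\langle \nabla_X U, V\rangle + \langle U, \nabla_X V\rangle$, and the torsion-free property $[X,U] = \nabla_X U - \nabla_U X$, applied analogously to $[X,V]$. After substitution the $\nabla_X U$ and $\nabla_X V$ terms cancel in pairs, leaving exactly $\langle \nabla_U X, V\rangle + \langle \nabla_V X, U\rangle$.

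Combining this with the formula for $A(U,V)$ gives
\[
2\langle \nabla_V X, U\rangle - (\mathcal{L}_X g)(U,V) \;=\; \langle \nabla_V X, U\rangle - \langle \nabla_U X, V\rangle \;=\; A(U,V),
\]
as required. There is no real obstacle here; the only point to be careful about is not to confuse the two cancellations (metric compatibility versus vanishing torsion) and to keep the slots of $\mathcal{L}_X g$ in the right order, since the symmetric combination $\langle \nabla_U X, V\rangle + \langle \nabla_V X, U\rangle$ is precisely what makes $\mathcal{L}_X g$ symmetric in $U, V$ while $A$ remains antisymmetric.
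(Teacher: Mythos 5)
Your argument is correct, but it is organized differently from the paper's. The paper proves the identity by writing out the Koszul formula for $2\langle \nabla_V X, U\rangle$ (six terms), recognizing the combination $V\langle X,U\rangle - U\langle X,V\rangle + \langle X,[U,V]\rangle$ as $A(U,V)$ via its Proposition 3.6, and identifying the three remaining terms with $(L_Xg)(U,V)$ through the Lie-derivative product rule. You instead bypass the Koszul formula entirely: you take the covariant expression $A(U,V)=\langle\nabla_V X,U\rangle-\langle\nabla_U X,V\rangle$ from the proposition preceding 3.6, and separately establish the Killing-operator identity $(\mathcal{L}_Xg)(U,V)=\langle\nabla_U X,V\rangle+\langle\nabla_V X,U\rangle$ by combining metric compatibility with the torsion-free property. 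Conceptually your route exhibits the statement as the decomposition of the bilinear form $(U,V)\mapsto 2\langle\nabla_V X,U\rangle$ into its antisymmetric part $A$ and symmetric part $\mathcal{L}_Xg$, which is arguably cleaner and makes the symmetry bookkeeping transparent; the paper's route has the advantage of reusing its bracket formula for $A$ (Proposition 3.6) and staying entirely at the level of directional derivatives of inner products. Both rest on the same two properties of the Levi-Civita connection, and your cancellation of the $\nabla_X U$ and $\nabla_X V$ terms is exactly right.
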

\begin{proof}
We have
\[ 2 \lka \nabla _{V}X, U\rka = 
V \lka X, U\rka   + X \lka U, V\rka - U \lka V, X \rka
- \lka V, [X,U]\rka  + \lka X, [U,V] \rka
+ \lka U, [V, X]\rka .  \] 
That is, 
\begin{eqnarray*}
 2 \lka \nabla _{V}X, U\rka &=&  A(U,V) +  X \lka U, V\rka 
 - \lka V, [X,U]\rka + \lka U, [V, X]\rka  \\
 &=& A(U,V) +  X \lka U, V\rka 
 - \lka V, L_{X} U \rka - \lka U, L_{X} V \rka .
\end{eqnarray*}
Using  that $(L_{X}g)(U,V) = X \lka U, V\rka 
 - \lka  L_{X} U, V \rka - \lka U, L_{X} V \rka ,$ 
we obtain  
\[2 \lka \nabla _{V}X, U\rka =  A(U,V) +   (L_{X}g)(U,V).\]
\end{proof}

We can now show a Helmholtz decomposition of vector fields on $M.$ 
This is the theorem that follows. That is, we prove that a vector 
field $X$ on a compact Riemannian manifold can be uniquely 
decomposed as a sum of two vectors $Y$ and $Z,$ where $Y$ is the 
rotation component and $Z$ the expansion component. 
\begin{theorem}\label{mainhoz}
If $X \in \Gamma(M),$ there are two vector fields $Y$ and  $Z$ on $M$ 
such that $X = Y + Z,$ with $\text{div}Y = 0$ and $\text{curl}Z = 0.$ 
Moreover, the decomposition is unique.
\end{theorem}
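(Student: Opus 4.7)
The plan is to reduce the construction of the decomposition to solving a single Poisson equation on $M$. The earlier proposition shows that $\text{curl}(\text{grad}\,\phi) = 0$ for every smooth $\phi$, so it is natural to look for $Z$ in the form $Z = \text{grad}\,\phi$; then $\text{curl}\,Z = 0$ is automatic, and the requirement $\text{div}\,Y = \text{div}(X - Z) = 0$ translates, using $\text{div}\,\text{grad} = \Delta$, into the Poisson equation
\[
\Delta \phi = \text{div}\,X.
\]
So I would first reformulate the existence claim as the solvability of this single scalar PDE on the compact manifold $M$.

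Next I would establish the existence of $\phi$. Because $M$ is compact and oriented without boundary, Lemma \ref{div2} together with Stokes' theorem (applied to $\mathcal{L}_X\,dV = d\,\iota_X dV$) gives the compatibility condition $\int_M \text{div}(X)\,dV = 0$. Since the Laplace--Beltrami operator $\Delta$ is an elliptic, self-adjoint operator on $C^\infty(M)$ with $\ker\Delta = \mathbb{R}$ (constants), the Fredholm/Hodge-theoretic range characterisation $\text{range}(\Delta) = (\ker\Delta)^{\perp}$ then yields a solution $\phi\in C^\infty(M)$, unique up to an additive constant. Setting $Z := \text{grad}\,\phi$ and $Y := X - Z$ gives $\text{curl}\,Z = 0$ by the gradient-curl identity, and $\text{div}\,Y = \text{div}\,X - \Delta\phi = 0$ by construction.

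For uniqueness, suppose $X = Y_1 + Z_1 = Y_2 + Z_2$ are two such decompositions with the $Z_i$ gradients, say $Z_i = \text{grad}\,\phi_i$. Then $W := Y_1 - Y_2 = Z_2 - Z_1 = \text{grad}(\phi_2 - \phi_1)$ is simultaneously a gradient and divergence-free, so $\psi := \phi_2 - \phi_1$ satisfies $\Delta\psi = \text{div}(W) = 0$. Since harmonic functions on a compact manifold without boundary are constants, $\text{grad}\,\psi = 0$, hence $W = 0$ and $Y_1 = Y_2$, $Z_1 = Z_2$.

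The main obstacle I foresee is the uniqueness step for a fully general $Z$ with only $\text{curl}\,Z = 0$ imposed: on a compact manifold with nontrivial $H^1_{dR}(M)$ there exist closed non-exact $1$-forms, so curl-free vector fields need not be gradients, and the decomposition is only unique modulo harmonic vector fields in general. I would therefore either normalise the decomposition by requiring $Z$ to be a gradient (the natural \emph{expansion component} interpretation flagged in the paragraph preceding the theorem), or invoke Hodge theory directly to absorb the harmonic contribution into $Y$ (since harmonic $1$-forms are both closed and coclosed). The existence half, by contrast, reduces cleanly to the standard solvability of the Poisson equation on compact $M$.
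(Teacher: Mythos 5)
Your proof follows essentially the same route as the paper's: solve $\Delta\phi = \text{div}\,X$, set $Z=\text{grad}\,\phi$ and $Y=X-Z$, and for uniqueness reduce to a harmonic difference $\phi_2-\phi_1$ which must be constant on the compact manifold $M$. The two places where you go beyond the paper are genuine improvements rather than detours: the paper merely asserts that $\Delta\phi=\text{div}\,X$ is solvable, whereas you supply the compatibility condition $\int_M \text{div}(X)\,dV=0$ and the Fredholm/Hodge range argument that this requires; and the paper's uniqueness step silently assumes that every curl-free field on $M$ is a gradient, which fails whenever $H^1_{dR}(M)\neq 0$ (on a flat torus a nonzero constant vector field is simultaneously divergence-free and curl-free, so it can be shifted between $Y$ and $Z$), so the normalisation you propose --- requiring $Z$ to be a gradient, or absorbing harmonic fields into $Y$ --- is in fact needed for the uniqueness claim to hold as stated.
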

\begin{proof}
Let $\eta = \text{div}X$ and let $\phi$ solve the elliptic equation 
$\nabla \circ \nabla \phi =  \Delta \phi = \eta .$ 
Take $Z = \nabla \phi$ and $Y = X - \nabla \phi .$ 
Then $\text{curl}Z = \text{curl}\nabla \phi =0$ and 
$\text{div}Y = \eta - \nabla \phi =0.$ This proves the existence of 
$Y$ and $Z.$

Now suppose two decompositions of $X$ so that 
$X = Y_{1} + Z_{1}= Y_{2} + Z_{2} .$ 
As $\text{curl}Z_{1} =0,$ it follows that there are two functions 
$\phi_{i},~i=1,2$ such that $Z_{i}=\nabla\phi_{i},~i=1,2.$ \\
So, $Y_{2}-Y_{1} = \nabla (\phi_{2} - \phi_{1}). $
 
Denote $W=Y_{2}-Y_{1}$ and $\phi = \phi_{2} - \phi_{1} ,$ then 
$\text{div}W = \text{div} \nabla \phi .$ 
Since $\text{div}Y_{2}-\text{div}Y_{1}=0$ we get 
$\Delta \phi =0,$ thus,  $\phi_{2} - \phi_{1}$ must be 
constant. Taking the gradient yields $Z_{2}-Z_{1} =0.$ 
Then we have also  that $Y_{1}=Y_{2},$ hence the decomposition is 
unique. 
\end{proof}
For example, let $X=(x_{1}-x_{2})\partial _{x_{1}} + 
(x_{1}+x_{2})\partial _{x_{2}}.$
Then the Helmholtz decomposition is 
\[X=Y+Z ~~ \text{with} ~~  
Y =x_{1}\partial _{x_{1}}  + x_{2}\partial _{x_{2}} ~~ \text{and} ~~  
Z= - x_{2}\partial _{x_{1}}  + x_{1}\partial _{x_{2}} .\]
To visualise this, we consider the $2$-dimensional unit sphere with a 
local parametrisation 
\[ \Phi : (-\frac{\pi}{2} , \frac{\pi}{2}) 
\rightarrow \RR ^{3} ~~ \text{with}~~ \Phi (\theta , \phi )=
\begin{pmatrix}
\cos \theta \cos \phi \\
\cos \theta \sin \phi \\
\sin \theta 
\end{pmatrix} . \]
The differentials $\frac{\partial \Phi}{\partial \theta}$ and 
$\frac{\partial \Phi}{\partial \phi}$ give the Gramian 
$\displaystyle{g = \Big( g_{ij} \Big)_{ij} =  
\begin{pmatrix}
1 & 0 \\
0 & \cos ^{2}\theta
\end{pmatrix}}$
which has the associated vector field flow 
$X_{S^{2}} = (1,\cos ^{2} \theta )$ and stream plot as 
Figure  \ref{plotX2}.   
Observe that $X_{S^{2}}$ is divergence-free and that 
$\text{curl}X_{S^{2}} = - 2 \cos \theta \sin \theta .$
\begin{figure}[hbtp]
\centering
\includegraphics[scale=0.5]{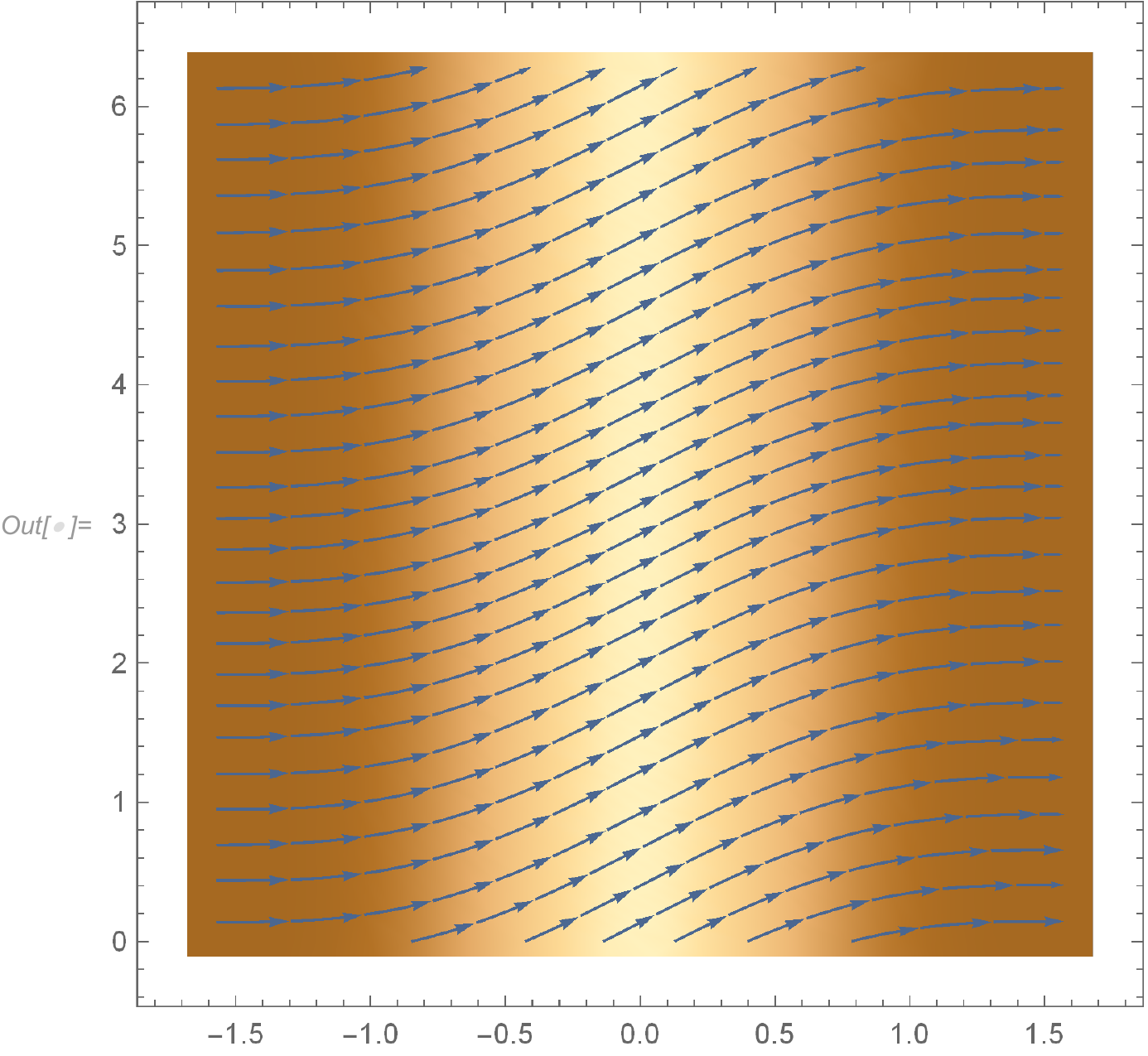}
\caption{\small{Stream plot of  the vector field 
$X_{S^{2}}$ on $S^{2}.$}}
\label{plotX2}
\end{figure}
By the Helmholtz decomposition, 
\[X_{S^{2}} = Y_{S^{2}} + Z_{S^{2}} ~~ \text{with} ~~Y_{S^{2}} = X_{S^{2}} ~~ \text{and} ~~ Z_{S^{2}} = (- \cos ^{2} \theta , 1)~~ 
\text{with}~~ \text{curl}Z_{S^{2}} = 0.\]
The stream plot of the Helmholtz decomposed $X_{S^{2}}$ flow is 
Figure \ref{plotYZ2}. 
\begin{figure}[hbtp]
\centering
\includegraphics[scale=0.5]{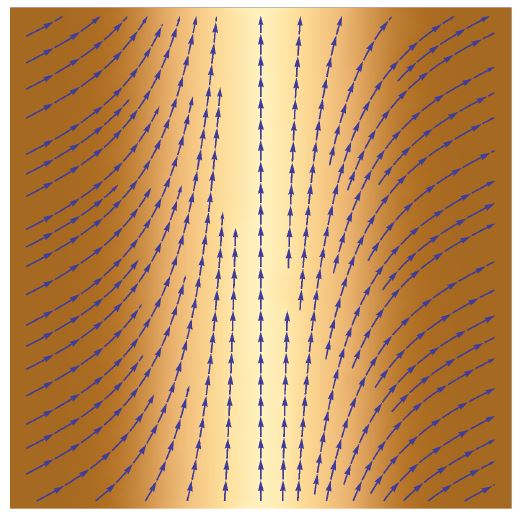}
\caption{\small{Stream plot of the decomposed $X_{S^{2}}.$}}
\label{plotYZ2}
\end{figure}\\

Finally, another main result of this work is the Stokes' type result:
\begin{theorem}
Let $X$ be smooth vector field on an open neighbourhood $U$ of 
$Z \subseteq  M$ with $X\Big|_{Z}$  compactly supported and let 
$dA$ be the area form on $Z.$ Then the smooth function 
$\lka \text{curl}(X)\Big|_{Z}, \hat{N} \rka$ 
on $Z,$ where $\hat{N}$ is the outward unit normal field along $Z$ 
in $M,$ is compactly supported and satisfies the Stokes'
 type identity 
\begin{equation}\label{smain}
\int _{Z}  \lka \text{curl}(X)\Big|_{Z}, \hat{N} \rka dA = 
\int _{\partial Z}  \lka X\Big|_{\partial Z}, T \rka dl.
\end{equation}
\end{theorem}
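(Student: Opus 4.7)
The plan is to translate both sides of the identity into the language of differential forms and then invoke the classical Stokes' theorem. Let $\omega_X = \alpha(X) \in \Omega^1(U)$ be the $1$-form dual to $X$ under the musical isomorphism, so that $\omega_X(V) = \langle X, V\rangle$ for every vector field $V$. By the definition of curl given earlier, $\alpha(\text{curl}(X)) = \ast d\omega_X$, i.e.\ the $1$-form dual to $\text{curl}(X)$ is $\ast d\omega_X$.

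The key geometric step is an identification lemma: for any vector field $V$ defined in a neighbourhood of the oriented submanifold $Z$, the pullback to $Z$ satisfies
\begin{equation*}
\iota_Z^{\ast}\bigl(\ast\omega_V\bigr) = \langle V\big|_Z, \hat{N}\rangle\, dA,
\end{equation*}
where $\iota_Z : Z \hookrightarrow M$ is the inclusion. I would prove this by working in a local orthonormal frame $(e_1, e_2, \hat{N}, \ldots)$ adapted to $Z$, writing $V = V^1 e_1 + V^2 e_2 + V^N \hat{N} + \cdots$, expanding $\ast \omega_V$ in the dual coframe, and noting that after pullback to $Z$ only the coefficient of $dx^1 \wedge dx^2$ survives, which is precisely $V^N = \langle V\big|_Z, \hat{N}\rangle$. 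Applying this lemma to $V = \text{curl}(X)$ and using $\alpha(\text{curl}(X)) = \ast d\omega_X$ together with $\ast\ast = \pm\mathrm{id}$ (with the sign determined by the ambient dimension and orientation), we obtain
\begin{equation*}
\langle \text{curl}(X)\big|_Z, \hat{N}\rangle\, dA \;=\; \iota_Z^{\ast}(d\omega_X).
\end{equation*}

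With the integrand identified as a pullback of the exact $2$-form $d\omega_X$, the compact support of $X\big|_Z$ lets me apply the classical Stokes' theorem on $Z$ to get $\int_Z \iota_Z^{\ast}(d\omega_X) = \int_{\partial Z} \iota_{\partial Z}^{\ast}(\omega_X)$. Since $\omega_X(T) = \langle X, T\rangle$ by construction of $\omega_X$, the boundary integral equals $\int_{\partial Z} \langle X\big|_{\partial Z}, T\rangle\, dl$, which is exactly the right-hand side of \eqref{smain}.

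The main obstacle is the identification step: verifying $\iota_Z^{\ast}(\ast\omega_V) = \langle V\big|_Z, \hat{N}\rangle dA$ cleanly, and bookkeeping the sign of $\ast\ast$ on $2$-forms in the ambient dimension so that no spurious minus sign appears. A secondary subtlety is matching the orientation $\partial Z$ inherits from $Z$ with the one implicit in the line integral on the right, so that Stokes' theorem delivers the identity with the correct sign rather than its negative; this is handled by demanding that $\hat{N}$ together with a positively oriented frame of $Z$ gives the ambient orientation of $M$, which is the natural convention already used when setting up the Levi-Civita symbol $\mathcal{E}$ earlier.
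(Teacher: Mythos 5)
Your proposal is correct and follows essentially the same route as the paper's proof: dualize $X$ to $\omega_X$, use $\omega_{\mathrm{curl}(X)} = \ast d\omega_X$ to identify $\lka \mathrm{curl}(X)|_Z , \hat{N}\rka\, dA$ with the pullback of $d\omega_X$ to $Z$, apply Stokes' theorem, and recognise the boundary term as $\lka X|_{\partial Z}, T\rka\, dl$. The only difference is one of emphasis — you spell out the adapted-frame computation behind $\iota_Z^{\ast}(\ast\omega_V) = \lka V|_Z,\hat N\rka\, dA$, which the paper passes over with ``similarly,'' while the paper devotes more space to the pointwise verification of the boundary identity.
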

\begin{proof}
Let $\eta = \omega_{X} \in \Omega_{M}^{1}(U)$ be dual to $X.$ Thus 
$\eta \Big|_{Z} \in \Omega_{Z}^{1}(Z)$ is compactly supported as it 
vanishes at points where $X\Big|_{Z}$ vanishes. By the definition of 
the curl, we have $\omega _{\text{curl}(X)} = \ast (d \eta).$ 
Since $\ast \circ \ast = (-1)^{r(n-r)}$ on $r$-forms on an $n$-
dimensional manifold, for $n=3$ and $r=1,$ we have 
$\ast \circ \ast = 1$ on $1$-forms on $U,$ so $d \eta 
= \ast (\omega _{\text{curl}(X)}).$ 
Similarly, 
\[\lka \text{curl}(X)\Big|_{Z}, \hat{N} \rka dA = (d \eta )\Big|_{Z} \] 
inside of $\Omega_{Z}^{2}(Z).$ But 
$(d \eta)\Big|_{Z}=d(\eta \Big|_{Z})$ since $d$ and pullback commute 
along the closed embedding of $Z$ into $M.$ This is compactly supported 
on $Z.$ For compactly supported $1$-form $\eta \Big|_{Z}$ on the $2$-
dimensional manifold $Z$ with boundary, we get 
\[ \int _{Z}  \lka \text{curl}(X)\Big|_{Z}, \hat{N} \rka dA = 
\int _{ Z}  (d \eta) \Big|_{Z} = 
\int _{ Z}  d( \eta \Big|_{Z}) = 
\int _{\partial Z}  \eta \Big|_{\partial Z} \]
showing \eqref{smain}. 

Thus our problem comes to proving the identity  
$\eta \Big|_{\partial Z}=  \lka X\Big|_{\partial Z}, T \rka dl
\in \Omega _{\partial Z}^{1}(\partial Z).$ 
But since $\eta |_{Z} = \omega _{X|_{Z}} \in 
\Omega _{ Z}^{1}(Z)$ it follows that 
$\omega _{X|_{Z}} = \omega _{X} Z$ pointwise.

Let $X|_{Z}=G$ and $G|_{\partial Z}=H ,$ we want to prove that for any 
smooth vector field $H$ along $\partial Z ,$
\begin{equation}\label{id1}
\lka H, T \rka dl = \omega_{H}
\in \Omega _{\partial Z}^{1}(\partial Z)
\end{equation}
 generally for any $1$-dimensional Riemannian manifold $C$ with length 
 form $dl$ dual to the tangent field $T.$  Evaluating both sides of 
 \eqref{id1} at a point $p \in C,$ we obtain 
a $1$-dimensional real vector space $V$ endowed with an inner product. 
So, let $V=T_{p}C,$ if $v \in V$ with length for $\phi ,$ and for 
$t \in V$ the vector dual to $\phi$  we have $\lka v ,t \rka \phi = 
\lka v , \cdot \rka$ is in the dual space. It suffices to check this 
equality when evaluating both sides on the basis $\{ t \}.$ But 
since $\phi (t) =1$ by the definition of $t,$ the result follows. 
\end{proof}

\section{Conclusion}
We have constructed vorticity of vector field flows on compact smooth 
Riemannian manifolds through differential operators mainly the curl 
and the divergence operators. Many properties of vorticity on the 
manifolds were established and we proved that a vector field on a 
compact Riemannian manifold admits  a unique Helmholtz decomposition. 
We also proved  a Stokes' type identity for the curl operator on smooth 
tensor fields on $M.$

This study can be extended to applications of the central ideas of 
the paper to physical flow problems in engineering and industry. 
One can as well study the vorticity of climate variabilities on the 
earth surface using the machinery developed in this paper. This will 
throw more light on the mathematical analysis of climate change. 

\section*{Acknowledgement}
The authors are grateful to the members of Research and Development 
Committee of the Department of Mathematics and Statistics of the Alex 
Ekwueme Federal University, Ndufu-Alike, Nigeria for their comments and 
suggestions that have improved the readability of this paper. We also 
thank members of our families for their cooperation while this research 
lasted. All errors are our responsibility.

\end{document}